\newtheorem{thm}{Theorem}
\newtheorem{lemma}[thm]{Lemma}
\theoremstyle{remark}
\newtheorem{cor}{Corollary}
\newtheorem*{rem}{Remark}
\DeclarePairedDelimiter{\norm}{\lVert}{\rVert}
\begin{document}

\title{A continuation method for computing the multilinear Pagerank}
\author{Alberto Bucci\thanks{M.Sc. graduate at the University of Pisa.} and Federico Poloni\thanks{University of Pisa, Department of Computer Science. \texttt{federico.poloni@unipi.it}. FP is partially supported by INDAM/GNCS and by the University of Pisa's projects PRA\_2017\_05 ``Modelli ed algoritmi innovativi per problemi strutturati e sparsi di grandi  dimensioni'' and PRA\_2020\_61 ``Analisi di reti complesse: dalla teoria alle applicazioni''. }}
\date{}

\maketitle

\begin{abstract}
The multilinear Pagerank model [Gleich, Lim and Yu, 2015] is a tensor-based generalization of the Pagerank model. Its computation requires solving a system of polynomial equations that contains a parameter $\alpha \in [0,1)$. For $\alpha \approx 1$, this computation remains a challenging problem, especially since the solution may be non-unique. Extrapolation strategies that start from smaller values of $\alpha$ and `follow' the solution by slowly increasing this parameter have been suggested; however, there are known cases where these strategies fail, because a globally continuous solution curve cannot be defined as a function of $\alpha$. In this paper, we improve on this idea, by employing a predictor-corrector continuation algorithm based on a more general representation of the solutions as a curve in $\mathbb{R}^{n+1}$. We prove several global properties of this curve that ensure the good behavior of the algorithm, and we show in our numerical experiments that this method is significantly more reliable than the existing alternatives.
\end{abstract}

\section{Introduction}

Set
\[
x^{\otimes m} = \underbrace{x \otimes x \otimes \dots \otimes x}_{\text{$m$ times}},
\]
where $\otimes$ denotes the Kronecker product~\cite[Section~11.4]{Handbook}, and $e = [1,\,1\,\dots,\,1]^T \in \mathbb{R}^n$, the vector of all ones. Let $R \in \mathbb{R}^{n\times n^m}$ be a non-negative matrix such that $e^T R = (e^T)^{\otimes m}$, and $v\in\mathbb{R}^n$ be a \emph{stochastic} vector, i.e., a non-negative vector with $e^T v = 1$.
The \emph{Multilinear Pagerank problem} consists in finding a stochastic solution $x\in\mathbb{R}^n$ to the equation
\begin{equation} \label{mlpr}
x = f_\alpha(x), \quad f_\alpha(x) = \alpha R(x^{\otimes m}) + (1-\alpha) v,
\end{equation}
for a certain $m\in \mathbb{N}, m \geq 2$ and a given value $\alpha \in (0,1)$.

Equation~\eqref{mlpr} is a generalization of the equation behind the well-known Pagerank model~\cite{Pageetal98} (to which it reduces for $m=1$), and has been introduced  in~\cite{DBLP:journals/corr/GleichLY14} as a simplified version of a higher-order Markov chain model with memory. The equation itself has a probabilistic interpretation that was suggested in~\cite{BenGL17}.

Problem~\eqref{mlpr} can be reduced to the computation of Z-eigenvalues of tensors~\cite{lim2005singular,NgQZ09,Qi07}, so some theory and algorithms for that problem can also be applied here.

Various algorithms have been suggested to compute solutions of this equation; see e.g.~\cite{CipRT20, DBLP:journals/corr/GleichLY14,MeiP}. Among the simplest choices we have the fixed-point iteration
\[
x_{k+1} = f_\alpha(x_k), \quad k=0,1,2,\dots,
\]
or the Newton--Raphson method on the function $f_\alpha(x)-x$, i.e.,
\begin{equation} \label{newton}
    x_{k+1} = x_k - (\alpha P_x-I)^{-1} (f_{\alpha}(x_k) - x_k), \quad k=0,1,2,\dots.
\end{equation}
with $\alpha P_x$ the Jacobian matrix of $f_\alpha$.

It is generally recognized that the problem is easier for small $\alpha$, especially for $\alpha \leq \frac1m$. For values of $\alpha$ approaching $1$, its numerical solution is more complicated, and the solution may be non-unique. Sufficient conditions for the uniqueness of solutions have been proposed in literature~\cite{Uniqueness,LiLNV17}; the simplest (but weakest) of them is $\alpha \leq \frac1m$.

In view of this property, various of the algorithms proposed fall in the setting of extrapolation methods, where the a sequence of solutions $x^{(1)}, x^{(2)}, x^{(3)}, \dots$ associated to increasing values of $\alpha^{(1)} < \alpha^{(2)} < \alpha^{(3)} < \alpha^{(k)}$ is computed, and used to obtain an initial guess for a further solution with parameter $\alpha^{(k+1)} > \alpha^{(k)}$. In this way, one can start by solving problems in the `easier regime' $\alpha \approx \frac1m$, and at each step use the previously computed values of $x$ to get a sufficiently accurate initial value.

However, as acknowledged in~\cite{MeiP}, this idea can fail spectacularly, because in some cases the solution of the problem is not given by a continuous function $x(\alpha)$. An example where this is well visible is obtained with the $6\times 6^2$ example \texttt{R6\_3} in the dataset of~\cite{DBLP:journals/corr/GleichLY14} for $\alpha = 0.99$: see Figure~\ref{fig:homoplot} for a plot that shows the behaviour of one entry of $x$ with respect to $\alpha$.
%
\begin{figure}
    \centering
    \includegraphics[width=\textwidth]{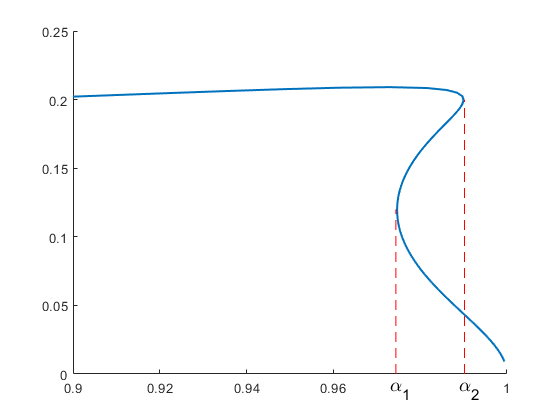}
    \caption{\protect{The first entry $x_1$ of all solution(s) vs. the parameter $\alpha$ for the example \texttt{R6\_3} in~\cite{DBLP:journals/corr/GleichLY14}. There are three distinct stochastic solutions for $\alpha \in [\alpha_1,\alpha_2] \approx [0.9749,\,0.9899]$, and one solution in the rest of $[0,1]$.}}
    \label{fig:homoplot}
\end{figure}
When one attempts to compute solutions for increasing values of $\alpha$, they end up tracking the solution with the largest value of $x_1$ in the figure. When one first surpasses $\alpha = \alpha_2$, this solution is then worthless as an initial value, and the method basically needs to restart without a useful initial guess. In addition, according to the informal description in~\cite[Fig.~9]{DBLP:journals/corr/GleichLY14}, for $\alpha$ slightly larger than $\alpha_2$ the common algorithms are slowed down by the presence of a so-called \emph{pseudo-solution}; they spend many iterations in which the iterates wander about in the set of stochastic vectors before convergence kicks in.

In this paper, we suggest an improved strategy to deal with these problematic examples. We consider the zero set of
\[
H(x,\alpha) := f_\alpha(x) - x =  \alpha R( x^{\otimes m}) + (1-\alpha) v - x, \quad H : \mathbb{R}^n \times \mathbb{R} \to \mathbb{R}^n
\]
as a curve in $\mathbb{R}^{n+1}$, and use extrapolation techniques in the family of \emph{continuation algorithms}~\cite{Allgower:2003:INC:945750} to compute points following this curve. More precisely, the algorithms we use belong to the family of~\emph{predictor-corrector} continuation algorithms.

In cases such as the one in Figure~\ref{fig:homoplot}, the curve makes an \emph{S-bend}, and we cannot consider it anymore (locally) as a parametrized curve with the coordinate $\alpha$ as a parameter: instead, we compute points following the curve, using (implicitly) an arc-length parametrization. Hence, once it reaches $\alpha_2$, our algorithm reduces the value of $\alpha$ tracking the bottom part of the curve in Figure~\ref{fig:homoplot}, reaches $\alpha_1$ again and then increases $\alpha$ a second time.

We give a theoretical contribution, proving that there is indeed a connected solution curve that allows us to track the solution correctly up to $\alpha=1$ in problems without singular points, and then we present a practical algorithm that allows us to solve multilinear Pagerank problem with higher reliability than the existing methods. 

The paper is structured as follows. In Section~\ref{sec:structure}, we recall various results on the properties of solutions, focusing on uniqueness in particular. The results are stated for generic $m$, unlike other references which focus on $m=2$. In Section~\ref{sec:curve}, we focus more closely on the geometrical structure of the solution set as a curve in $\mathbb{R}^{n+1}$. We continue with a general introduction on predictor-corrector methods in Section~\ref{sec:continuation}, and then we present the specific variant of the method that we use for this problem with all its algorithmic details in Section~\ref{sec:thiscontinuation}. In Section~\ref{sec:experiments} we present numerical experiments that prove the effectiveness of this method, and we end with some conclusions in Section~\ref{sec:conclusions}.

\section{Structure of solutions} \label{sec:structure}

We recall various results on the structure of the solution set of~\eqref{mlpr}. Most of these results appear already in previous works (see e.g.~\cite{MeiP}), mostly for $m=2$, but we present them here for completeness in the case of a general $m$.

\begin{thm} \label{theorem 10}
Let $x\in \mathbb{R}^n$ a nonnegative solution of equation~\eqref{mlpr} with $\alpha \in (0;1)$, then $e^T x =1$ or $e^Tx=c_{\alpha}$, where $c_\alpha$ is the other positive solution (besides $z=1$) of the equation $g(z)=0$, with $g(z) := \alpha z^m -z +(1-\alpha)$, and satisfies
\begin{equation}  \label{calpha}
\begin{cases}
c_{\alpha}>1 & \text{if } \alpha<\frac{1}{m},\\
c_{\alpha}=1 & \text{if }\alpha=\frac{1}{m},\\
c_{\alpha}<1 & \text{if } \alpha>\frac{1}{m}.\\
\end{cases}
\end{equation}
\end{thm}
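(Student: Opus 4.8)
The plan is to reduce the $n$-dimensional equation to a scalar polynomial equation by summing over the components, and then study that polynomial. First I would apply $e^T$ to both sides of~\eqref{mlpr}: using the hypotheses $e^T R = (e^T)^{\otimes m}$ and $e^T v = 1$, together with the mixed-product property of the Kronecker product (so that $(e^T)^{\otimes m}(x^{\otimes m}) = (e^T x) \otimes \dots \otimes (e^T x) = (e^T x)^m$, since each factor $e^T x$ is a scalar), one obtains $z = \alpha z^m + (1-\alpha)$ for $z := e^T x$; that is, $g(z) = 0$. Since $x$ is nonnegative we have $z \geq 0$, and since $g(0) = 1-\alpha > 0$ (because $\alpha < 1$) the value $z = 0$ is excluded, so $z$ is a \emph{positive} root of $g$.

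It then remains to identify the positive roots of $g$. The root $z = 1$ is immediate from $g(1) = \alpha - 1 + 1 - \alpha = 0$. To see there is exactly one other, I would use that for $m \geq 2$ the function $g$ is strictly convex on $(0,\infty)$, since $g''(z) = m(m-1)\alpha z^{m-2} > 0$ there; hence $g$ has at most two positive roots, and its unique critical point $z^\star := (m\alpha)^{-1/(m-1)}$ is the global minimizer on $(0,\infty)$. A direct comparison shows $z^\star > 1$ when $\alpha < \tfrac1m$, $z^\star = 1$ when $\alpha = \tfrac1m$, and $z^\star < 1$ when $\alpha > \tfrac1m$. If $\alpha = \tfrac1m$, then $z = 1$ is itself the minimizer, hence a double root and the only positive root, so setting $c_\alpha = 1$ is consistent with~\eqref{calpha}. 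If $\alpha \neq \tfrac1m$, then $g(z^\star) < 0$: otherwise $g \geq g(z^\star) \geq 0$ on all of $(0,\infty)$, contradicting $g(1) = 0$ together with $z^\star \neq 1$ (which forces $g(1) > g(z^\star)$). Then, by the intermediate value theorem applied with $g(0) > 0$, $g(z^\star) < 0$ and $g(z) \to +\infty$ as $z \to +\infty$, $g$ has one root in $(0,z^\star)$ and one in $(z^\star,+\infty)$; one of them is $z = 1$ and the other is $c_\alpha$, necessarily on the opposite side of $z^\star$ and therefore on the opposite side of $1$. This yields $c_\alpha > 1$ for $\alpha < \tfrac1m$ and $c_\alpha < 1$ for $\alpha > \tfrac1m$, and in particular $c_\alpha > 0$ in every case, completing~\eqref{calpha}.

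The calculations here are elementary, so I do not expect a genuine obstacle. The two points needing mild care are the Kronecker mixed-product manipulation in the first step — which must correctly collapse the $m$ factors $e^T x$ into the scalar $(e^T x)^m$ — and the bookkeeping of which root lies on which side of the critical point $z^\star$; the latter is precisely the source of the three-way split in~\eqref{calpha}, and the strict inequality $g(z^\star) < 0$ for $\alpha \neq \tfrac1m$ is what guarantees that $c_\alpha$ is a genuine second root distinct from $1$.
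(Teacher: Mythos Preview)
Your proposal is correct and follows essentially the same approach as the paper: reduce to the scalar equation $g(e^Tx)=0$ by left-multiplying with $e^T$, then locate the positive roots of $g$ via its unique critical point $z^\star=(m\alpha)^{-1/(m-1)}$. The only cosmetic differences are that you invoke strict convexity of $g$ (via $g''>0$) where the paper argues directly from the sign of $g'$, and you add the small observation $g(0)=1-\alpha>0$ to exclude $z=0$ explicitly.
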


\begin{proof}
Let $x$ be a nonnegative solution of~\eqref{mlpr}.
Multiplying on the left by $e^T$, we get
\[
e^T x = \alpha e^T R(x^{\otimes m}) + (1-\alpha)e^Tv = \alpha (e^T)^{\otimes m} x^{\otimes m} + (1-\alpha) = \alpha (e^T x)^m + (1-\alpha).
\]
Hence $g(e^Tx) = 0$. It remains to prove that the only positive solutions of $g(z)=0$ are $1$ and another one $c_\alpha$ satisfying~\eqref{calpha}. 

By studying the sign of $g'(z)$, one sees that $g(z)$ has a global minimum
\[
z_* = \sqrt[m-1]{\frac{1}{\alpha m}},
\]
for which
\[
\begin{cases}
z_*>1 & \text{if } \alpha<\frac{1}{m},\\
z_*=1 & \text{if }\alpha=\frac{1}{m},\\
z_*<1 & \text{if } \alpha>\frac{1}{m}.\\
\end{cases}
\]
In particular, $g(z_*) \leq g(1) = 0$. The function $g(z)$ is decreasing in $(0,z_*)$ and increasing in $(z_*,\infty)$, hence (unless $z_*=1$) it has two intersections with the $x$-axis, one in $(0,z_*)$ and one in $(z_*,\infty)$.
\end{proof}

One can show that solutions with $e^Tx=1$ and $e^Tx = c_\alpha$ do indeed exist. For any $c \geq 0$, let
\begin{equation} \label{Zc}
    \mathcal{Z}_c := \{x \in \mathbb{R}^n \colon e^T x = c, \, x \geq 0 \},    
\end{equation}
the set of non-negative vectors with fixed entry sum $c$ (in particular, $\mathcal{Z}_1$ is the set of stochastic vectors).

\begin{thm} \label{existence}
Equation~\ref{mlpr} has at least one solution in $\mathcal{Z}_{c_{\alpha}}$, and one in $\mathcal{Z}_1$.
\end{thm}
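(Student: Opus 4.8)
The plan is to produce the two solutions via Brouwer's fixed-point theorem, applied to the map $f_\alpha$ restricted to the two invariant sets $\mathcal{Z}_1$ and $\mathcal{Z}_{c_\alpha}$. First I would record that, for any $c\ge 0$, the set $\mathcal{Z}_c$ defined in~\eqref{Zc} is (a scaled copy of) the standard simplex: it is non-empty, convex and compact. By Theorem~\ref{theorem 10} we have $c_\alpha>0$, so both $\mathcal{Z}_1$ and $\mathcal{Z}_{c_\alpha}$ are admissible domains of this kind; and $f_\alpha$, being polynomial in the entries of $x$, is continuous on all of $\mathbb{R}^n$.

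The key step is to verify that $f_\alpha$ maps $\mathcal{Z}_c$ into itself whenever $c$ is a non-negative root of $g$. If $x\in\mathcal{Z}_c$ then $x\ge 0$, hence $x^{\otimes m}\ge 0$; since $R\ge 0$, $v\ge 0$ and $\alpha\in(0,1)$, we get $f_\alpha(x)=\alpha R(x^{\otimes m})+(1-\alpha)v\ge 0$. Moreover, exactly as in the proof of Theorem~\ref{theorem 10}, multiplying by $e^T$ gives
\[
e^T f_\alpha(x) = \alpha (e^T x)^m + (1-\alpha) = \alpha c^m + (1-\alpha) = c + g(c) = c,
\]
using $g(c)=0$, so $f_\alpha(x)\in\mathcal{Z}_c$. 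Taking $c=1$ and $c=c_\alpha$, which are precisely the two non-negative roots of $g$ identified in Theorem~\ref{theorem 10}, Brouwer's theorem produces a fixed point of $f_\alpha$ in each of $\mathcal{Z}_1$ and $\mathcal{Z}_{c_\alpha}$; such a fixed point is by definition a solution of~\eqref{mlpr} in that set.

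I do not anticipate a real obstacle: this is a routine invariant-set-plus-Brouwer argument, and the only items needing (minor) care are the non-emptiness, convexity and compactness of $\mathcal{Z}_c$ and the inequality $c_\alpha\ge 0$, both of which are immediate from the preceding results. One could instead invoke topological degree or the Knaster--Kuratowski--Mazurkiewicz framework, but this would add nothing over the elementary argument. Note also that when $\alpha=\frac1m$ the two sets coincide, since then $c_\alpha=1$, and the statement collapses to a single existence claim.
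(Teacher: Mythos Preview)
Your proposal is correct and follows essentially the same route as the paper: verify that each $\mathcal{Z}_c$ with $g(c)=0$ is a compact convex set mapped into itself by $f_\alpha$, and then apply Brouwer's fixed-point theorem. Your write-up is in fact slightly more careful than the paper's, since you explicitly check non-negativity of $f_\alpha(x)$ and continuity of $f_\alpha$.
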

\begin{proof}

The set $\mathcal{Z}_{c_\alpha}$ is a simplex, and in particular it is compact and convex. Moreover, 
$f_\alpha(\mathcal{Z}_{c_\alpha})\subset \mathcal{Z}_{c_\alpha}$; indeed $\forall y\in\mathcal{Z}_{c_{\alpha}}$
\begin{align*}
e^T f_\alpha(y) & = \alpha e^T R (y^{\otimes m})+(1-\alpha) e^T v\\
         & = \alpha (e^T y)^m  + 1-\alpha\\
         & = \alpha c_\alpha^m + 1-\alpha= c_\alpha, 
\end{align*}
where we have used the fact that $g(c_\alpha)=0$ in the last equality.

Hence by the Brouwer fixed-point theorem~\cite{Brouwer} we can conclude that $f$ has a fixed point in $\mathcal{Z}_{c_\alpha}$, and this fixed point is a solution of~\eqref{mlpr}.

The same proof works after replacing $c_\alpha$ with $1$, since the only property that we have used is that $g(c_\alpha)=0$.
\end{proof}

Moreover, there is a unique solution which achieves the smaller among the possible two values of $e^Tx$. This unique solution is called \emph{minimal solution} in~\cite{MeiP} and other literature on similar matrix equations.
\begin{thm}
For each fixed $\alpha \in [0,1)$, Equation~\eqref{mlpr} has a unique solution in $\mathcal{Z}_{\min(1,c_\alpha)}$.
\end{thm}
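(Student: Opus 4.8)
The plan is to establish uniqueness on $\mathcal{Z}_{\min(1,c_\alpha)}$ by a monotonicity argument that is standard for this type of minimal-solution result. Write $c := \min(1, c_\alpha)$. By Theorem~\ref{existence} there is at least one solution in $\mathcal{Z}_c$, so only uniqueness needs proof. First I would set up the iteration $x_0 = 0$, $x_{k+1} = f_\alpha(x_k)$, and show by induction that the sequence is monotonically nondecreasing ($x_k \le x_{k+1}$ entrywise) and bounded: the base step $0 = x_0 \le x_1 = (1-\alpha)v$ is immediate, and the inductive step uses that $f_\alpha$ is monotone on the nonnegative orthant, since $R \ge 0$ and $y \mapsto y^{\otimes m}$ is entrywise monotone on nonnegative vectors. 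For boundedness, I would check that $\mathcal{Z}_c$ (equivalently, the simplex $\{x \ge 0 : e^T x \le c\}$) is mapped into itself whenever $c \in \{1, c_\alpha\}$, exactly as in the proof of Theorem~\ref{existence}; combined with monotonicity this traps the whole sequence inside that simplex. Hence $x_k \uparrow x_*$ for some $x_* \in \mathcal{Z}_c$ (the sum $e^T x_k$ is nondecreasing and bounded by $c$, but in fact converges to $c$ since $g(e^T x_k)$ tracks the scalar iteration toward its smallest positive fixed point), and by continuity $x_*$ is a solution.

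Next I would prove $x_*$ is minimal: for \emph{any} nonnegative solution $\tilde x$ of~\eqref{mlpr} with $e^T \tilde x = c$, a second induction gives $x_k \le \tilde x$ for all $k$ — indeed $x_0 = 0 \le \tilde x$, and if $x_k \le \tilde x$ then $x_{k+1} = f_\alpha(x_k) \le f_\alpha(\tilde x) = \tilde x$ by monotonicity of $f_\alpha$. Passing to the limit yields $x_* \le \tilde x$. So $x_*$ lies entrywise below every solution in $\mathcal{Z}_c$.

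Finally I would upgrade "below" to "equal". Since both $x_*$ and $\tilde x$ lie in $\mathcal{Z}_c$, we have $e^T(\tilde x - x_*) = c - c = 0$; but $\tilde x - x_* \ge 0$, and a nonnegative vector with zero entry-sum is the zero vector, so $\tilde x = x_*$. This is the crux of the argument and the reason the result is false on the \emph{larger} of the two level sets: it is precisely the fixed entry-sum constraint that turns the one-sided inequality into an equality.

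The main obstacle I anticipate is not any single step but getting the invariant region right: one must verify carefully that the truncated simplex $\{x \ge 0 : e^T x \le c\}$, and not merely the slice $\mathcal{Z}_c$, is $f_\alpha$-invariant, so that the monotone iteration started at $0$ actually stays bounded and converges. This hinges on the scalar fact from Theorem~\ref{theorem 10} that $c = \min(1, c_\alpha)$ is the \emph{smallest} positive root of $g$, so that $g(z) \ge 0$ for $z \in [0,c]$ and hence $e^T f_\alpha(x) = \alpha (e^T x)^m + (1-\alpha) \le e^T x$ whenever $e^T x \le c$; the choice of the smaller root is exactly what makes this inequality point the right way. Everything else is routine monotonicity bookkeeping.
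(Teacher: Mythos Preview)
Your approach is essentially the paper's: run the fixed-point iteration from $x_0=0$, show it is monotone nondecreasing and bounded above by any solution $\tilde x\in\mathcal Z_c$, pass to the limit $x_*$, and finish with the entry-sum trick $e^T(\tilde x-x_*)=0$ together with $\tilde x-x_*\ge 0$ to force $\tilde x=x_*$.

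One slip to fix in your invariance discussion: from $g(z)\ge 0$ on $[0,c]$ you conclude $e^T f_\alpha(x)\le e^T x$, but $g(z)\ge 0$ says exactly the opposite, namely $\alpha z^m+(1-\alpha)\ge z$, i.e.\ $e^T f_\alpha(x)\ge e^T x$ (take $x=0$: the left side is $1-\alpha>0$). What you actually need for invariance of $\{x\ge 0:e^Tx\le c\}$ is $e^T f_\alpha(x)\le c$, and that follows directly from monotonicity of $z\mapsto\alpha z^m+(1-\alpha)$ on $[0,\infty)$ together with $\alpha c^m+(1-\alpha)=c$. The paper sidesteps this whole detour: it fixes any solution $y\in\mathcal Z_c$ (existence is already known) and proves $x_k\le y$ by the same one-line induction you use for minimality, so a single bound delivers both convergence and $x_*\le y$ at once.
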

\begin{proof}
Consider the iteration
\begin{equation} \label{fixedpoint}
x_0 = 0, \quad  x_{k+1}=f_\alpha(x_k), \quad k=0,1,2,\dots .
\end{equation}
We can show by induction that $x_{k+1}\geq x_k$: the base step $k=0$ is obvious, and
\[
x_{k+1}-x_k= f_\alpha(x_k) - f_\alpha(x_{k-1}) = \alpha R(x_k^{\otimes m} -x_{k-1}^{\otimes m}) \geq 0.
\]
Let $y$ be a solution of~\eqref{mlpr} with $e^Ty = \min(1,c_\alpha)$.
Similarly, we can show by induction that $x_k \leq y$. The base step $k=0$ is again obvious, and
\[
y - x_{k+1} = f_\alpha(y) - f_\alpha(x_k) = \alpha R(y^{\otimes m} - x_k^{\otimes m}) \geq 0.
\]
So the sequence is weakly increasing and bounded, hence it converges to a limit $x_\infty := \lim_{k\to\infty} x_k$.

Passing to the limit, we see that $x_\infty$ is a solution of~\eqref{mlpr}, hence
\[
e^T x_\infty \geq \min(1,c_\alpha) = e^T y.
\]
Again passing to the limit, we see that $x_\infty \leq y$. These two properties together imply that $y = x_\infty$. Thus we have proved that $y=x_\infty$ for each solution $y \in \mathcal{Z}_{\min(1,c_\alpha)}$.
\end{proof}

The following observation seems to be novel. 
\begin{thm}
Let $v$ be a strictly positive vector and let $0 \leq \alpha< 1$.

Any nonnegative solution of~\eqref{mlpr} is strictly positive. \label{Positive}
\end{thm}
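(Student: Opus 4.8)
The plan is to read off the conclusion directly from the fixed-point equation~\eqref{mlpr} itself, with essentially no extra work. Write $x = \alpha R(x^{\otimes m}) + (1-\alpha) v$ for the nonnegative solution $x$. Since $R$ is entrywise nonnegative and $x \geq 0$ forces $x^{\otimes m} \geq 0$, the first summand $\alpha R(x^{\otimes m})$ is a nonnegative vector. On the other hand, because $\alpha < 1$ we have $1-\alpha > 0$, and $v$ is strictly positive by hypothesis, so the second summand $(1-\alpha) v$ is \emph{strictly} positive in every component.

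Adding the two, I would conclude that $x \geq (1-\alpha) v$ entrywise, and since every entry of $(1-\alpha) v$ is strictly positive, the same holds for $x$; this is the claim. There is no genuine obstacle here: the only points worth noting are that the strict inequality $\alpha < 1$ is really used (for $\alpha = 1$ the term $(1-\alpha) v$ vanishes and the statement is false in general), and that it is strict positivity of $v$, rather than mere nonnegativity, that makes the argument work. Equivalently, one can phrase the same reasoning as: any nonnegative solution is a fixed point of $f_\alpha$, which maps the nonnegative orthant into the set $\{y : y \geq (1-\alpha) v\}$ of strictly positive vectors, hence the solution lies in that set.
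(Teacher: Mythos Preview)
Your proof is correct and is essentially the same as the paper's: both simply observe that $x = \alpha R(x^{\otimes m}) + (1-\alpha)v$ has a nonnegative first summand and a strictly positive second summand (since $\alpha<1$ and $v>0$), hence $x$ is strictly positive. The paper writes this componentwise while you phrase it vectorwise as $x \geq (1-\alpha)v$, but the argument is identical.
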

\begin{proof}
We have
\[
x_i = e_i^T x = \alpha e_i^T R(x^{\otimes m})+(1-\alpha)e_i^T v =\alpha R_i(x^{\otimes m})+(1-\alpha)v_i > 0,
\]
since $\alpha  R_i(x^{\otimes m})$ and $(1-\alpha)v_i > 0$.
\end{proof}
\begin{cor} \label{cor:solsincube}
In the previous setting, $x\in (0,1)^n$.
\begin{proof}
Since every component of $x$ is strictly positive and the vector is stochastic every component must also be less then 1. 
\end{proof}
\end{cor}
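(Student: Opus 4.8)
The plan is to obtain the two inequalities that define the open cube $(0,1)^n$ separately, each from a fact already available. The left-hand bound $x_i>0$ for all $i$ is exactly Theorem~\ref{Positive} (strict positivity of any nonnegative solution), so nothing new is needed there; the whole content of the corollary is the upper bound $x_i<1$.

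For the upper bound I would use the constraint on the entry sum. A stochastic solution has $e^Tx=1$, and more generally the entry sum of any nonnegative solution equals $1$ or $c_\alpha$, hence is at most $1$ whenever $\alpha\ge\frac1m$ by~\eqref{calpha}. Isolating one entry, $x_i=e^Tx-\sum_{j\ne i}x_j\le 1-\sum_{j\ne i}x_j$, and since every $x_j$ with $j\ne i$ is strictly positive by Theorem~\ref{Positive} (and at least one such index exists once $n\ge2$), we conclude $x_i<1$. Together with $x_i>0$ this is precisely $x\in(0,1)^n$.

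I do not expect any real obstacle: the corollary falls out immediately from Theorem~\ref{Positive} combined with the bound on $e^Tx$. The only points deserving a word of care are the degenerate case $n=1$, where the unique stochastic solution is the boundary point $x=1$ so that $n\ge2$ must be tacitly assumed (as it is in the Multilinear Pagerank setting), and the fact that for $\alpha<\frac1m$ the equation also admits nonnegative solutions with $e^Tx=c_\alpha>1$ whose entries may exceed $1$; thus the statement is best read as concerning the stochastic solutions, the ones the problem actually seeks, rather than every nonnegative solution of~\eqref{mlpr}.
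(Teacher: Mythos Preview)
Your argument is correct and follows the same idea as the paper: strict positivity of every entry comes from Theorem~\ref{Positive}, and the strict upper bound $x_i<1$ follows because the remaining entries are strictly positive and the entry sum is at most~$1$. The paper compresses this into a single sentence (``the vector is stochastic, so every component must also be less than~$1$''); your version is the same reasoning spelled out, and your remarks on the tacit assumptions $n\ge 2$ and on restricting to stochastic solutions (since for $\alpha<\frac1m$ the branch with $e^Tx=c_\alpha>1$ need not lie in the cube) are appropriate caveats that the paper leaves implicit.
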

\section{The curve of solutions} \label{sec:curve}

In this section, we argue that the stochastic solutions of~\eqref{mlpr} form a smooth curve (under some regularity assumptions).

We denote with $P_x \in \mathbb{R}^{n\times n}$ the Jacobian matrix of $Rx^{\otimes m}$ with respect to $x$, i.e.,
\begin{align} \label{Px}
P_x = R\frac{\partial x^{\otimes m}}{\partial x} = R(I \otimes x \otimes \dots \otimes x + x \otimes I \otimes x \otimes \dots \otimes x + \dots + x \otimes \dots \otimes x \otimes I).
\end{align}

The Jacobian matrix of $H(x,\alpha)$ is
\begin{equation} \label{Jac}
    J_H[x,\alpha] = \begin{bmatrix}
    \alpha P_x - I & R(x^{\otimes m}) - v
    \end{bmatrix} \in \mathbb{R}^{n\times (n+1)},
\end{equation}
where the first $n\times n$ block column $\frac{\partial H}{\partial x}[x,\alpha] = P_x - I$ contains derivatives with respect to $x$ and the second contains derivatives with respect to $\alpha$.

We let
\[
\Delta_c := \{x \in \mathbb{R}^n : e^Tx = c\}.
\]
Note that this definition differs from that of $\mathcal{Z}_c$ in that we do not require that $x\geq 0$.

We start from a result proving the surjectivity of $\frac{\partial H}{\partial x}[x,\alpha]$ under some assumptions.
\begin{lemma}
Let $(x,\alpha)$ be such that $H(x,\alpha) = 0$, and $x \in \mathcal{Z}_1$.
\begin{enumerate}
    \item For $\alpha < \frac1m$, the matrix $\frac{\partial H}{\partial x}[x,\alpha] = \alpha P_x - I$ has full rank $n$.
    \item For $\alpha = \frac1m$, the matrix $\frac{\partial H}{\partial x}[x,\alpha] = \alpha P_x - I$ has rank $n-1$, and $e^T$ is the unique row vector (up to multiples) such that $e^T \frac{\partial H}{\partial x}[x,\alpha]$.
    \item For $\alpha = \frac1m$, the matrix $\frac{\partial H}{\partial x}[x,\alpha] = \alpha P_x - I$ is surjective as a linear map from $\Delta_0$ to $\Delta_0$.
\end{enumerate}
\end{lemma}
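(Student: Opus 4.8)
The plan is to build everything on the identity $e^{T}P_{x}=m\,e^{T}$, which holds whenever $e^{T}x=1$: applying $e^{T}$ to each of the $m$ Kronecker summands in~\eqref{Px} and using $e^{T}R=(e^{T})^{\otimes m}$ together with $e^{T}x=1$ collapses each summand to $e^{T}$. Hence $S:=\tfrac1m P_{x}$ is a nonnegative matrix all of whose column sums equal $1$, so $\rho(P_{x})=m$ and $\rho(\alpha P_{x})=\alpha m$. Part~1 is then immediate: when $\alpha<\tfrac1m$ we get $\rho(\alpha P_{x})=\alpha m<1$, so $1$ is not an eigenvalue of $\alpha P_{x}$ and $\alpha P_{x}-I$ is nonsingular. (This part uses only $x\in\mathcal{Z}_{1}$, not $H(x,\alpha)=0$.)

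For part~2, at $\alpha=\tfrac1m$ the identity becomes $e^{T}(S-I)=0$, so $e^{T}$ is a left null vector and $\operatorname{rank}(S-I)\le n-1$. Everything then reduces to showing that $e^{T}$ spans the \emph{entire} left null space, i.e.\ that the eigenvalue $1$ of the column-stochastic matrix $S$ is geometrically simple (for a stochastic matrix the eigenvalue $1$ is automatically semisimple, so this is the same as $\operatorname{rank}(S-I)=n-1$). I would obtain this from the Perron--Frobenius theorem, once $S$ --- equivalently $P_{x}$ --- is known to be irreducible, or more weakly once the chain with transition matrix $S^{T}$ has a single recurrent class. Here positivity of $x$ (which, for $v>0$, holds by Theorem~\ref{Positive} and Corollary~\ref{cor:solsincube}) fixes the zero pattern of the factor $I\otimes x^{\otimes(m-1)}+\dots$ in~\eqref{Px}, reducing irreducibility of $P_{x}$ to a connectivity property of $R$. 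I expect this to be the main obstacle: the column-sum normalization alone does not force a unique recurrent class, so a structural hypothesis on $R$ must be invoked precisely at this step, and the left-null-vector computation only delivers the easy inequality.

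For part~3, first observe that $e^{T}(S-I)=0$ forces $(S-I)\,\mathbb{R}^{n}\subseteq\Delta_{0}$, so $S-I$ restricts to a linear endomorphism of the $(n-1)$-dimensional space $\Delta_{0}$. By part~2, $\ker(S-I)$ is one-dimensional; it is spanned by a nonnegative Perron eigenvector $w\ne 0$ of $S$ for the eigenvalue $1$, so $e^{T}w>0$, hence $w\notin\Delta_{0}$ and $\ker(S-I)\cap\Delta_{0}=\{0\}$. Thus $S-I$ is injective on $\Delta_{0}$, and an injective endomorphism of a finite-dimensional space is surjective; this gives the stated surjectivity of $S-I:\Delta_{0}\to\Delta_{0}$. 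In summary, parts~1 and~3 are routine consequences of the spectral picture of $S$, and the only substantial point is the geometric-simplicity claim buried in part~2.
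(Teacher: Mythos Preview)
Your treatment of parts~1 and~3 matches the paper's almost verbatim: the identity $e^{T}P_{x}=m\,e^{T}$ gives $\rho(\alpha P_{x})=\alpha m<1$ for part~1, and for part~3 the Perron eigenvector spans the one-dimensional kernel and has positive entry sum, so it misses $\Delta_{0}$ and the restriction is injective, hence surjective.

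The gap is in part~2. You correctly isolate the issue---geometric simplicity of the eigenvalue $1$ for the column-stochastic matrix $S=\tfrac1m P_{x}$---but you then stop, saying ``a structural hypothesis on $R$ must be invoked'' and treating it as an unresolved obstacle. In fact no hypothesis beyond the standing one, $e^{T}R=(e^{T})^{\otimes m}$, is needed, and the paper gives a short direct argument. Suppose for contradiction that $1$ has geometric multiplicity at least $2$ in $S$; then $S^{T}$ has at least two disjoint ergodic classes $E_{1},E_{2}$, and the corresponding block-column structure forces $P_{E_{2}E_{1}}=P_{E_{3}E_{1}}=0$ and $P_{E_{1}E_{2}}=0$ (with $E_{3}=(E_{1}\cup E_{2})^{c}$). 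Now pick any multi-index $(j_{1},\dots,j_{m})$ with $j_{1}\in E_{1}$ and $j_{2}\in E_{2}$. From the explicit expansion of $P_{i,j}$ obtained from~\eqref{Px}, the entry $P_{i,j_{2}}$ contains the nonnegative summand $R_{i,(j_{1},j_{2},\dots,j_{m})}\,x_{j_{1}}x_{j_{3}}\cdots x_{j_{m}}$; since $x>0$ and $P_{i,j_{2}}=0$ for $i\in E_{1}$, this forces $R_{i,(j_{1},\dots,j_{m})}=0$ for all $i\in E_{1}$. Symmetrically, looking at $P_{i,j_{1}}$ for $i\in E_{2}\cup E_{3}$ gives $R_{i,(j_{1},\dots,j_{m})}=0$ there too. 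Hence the entire column $(j_{1},\dots,j_{m})$ of $R$ vanishes, contradicting $e^{T}R=(e^{T})^{\otimes m}$.

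So the ``connectivity property of $R$'' you anticipated is precisely its column-stochasticity, already assumed; the tensor structure of $P_{x}$ is what lets a single zero column of $R$ be read off from two zero blocks of $P_{x}$. Note also that the argument yields only a single ergodic class, not irreducibility of $P_{x}$---which is all that is required, and which you had correctly flagged as the weaker sufficient condition.
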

\begin{proof}
We prove the three points.
\begin{enumerate}
    \item Note that the matrix $P_x$ is nonnegative, and that
    \begin{align}
    e^T P_x & = (e^T)^{\otimes m} (I \otimes x \otimes \dots \otimes x + x \otimes I \otimes x \otimes \dots \otimes x + \dots + x \otimes \dots \otimes x \otimes I) \nonumber \\
    &= e^T \otimes e^Tx \otimes \dots \otimes e^Tx + e^Tx \otimes e^T \otimes e^Tx \otimes \dots \otimes e^Tx + \dots + e^Tx \otimes \dots \otimes e^Tx \otimes e^T \nonumber \\
        & = m(e^Tx)^{m-1} e^T. \label{Pxstochasticity}
    \end{align}
    Hence $e^T \alpha P_x < e^T$ when $\alpha < \frac1m$ and $e^Tx=1$. Thus $\alpha P_x$ is a sub-stochastic matrix, and $\alpha P_x - I$ is invertible by the Perron-Frobenius theorem \cite[Section~10.4]{Handbook}.
    \item Again thanks to the relation~\eqref{Pxstochasticity}, the matrix $\alpha P_x$ is (column-)stochastic when $\alpha = \frac1m$ and $e^Tx=1$; hence $e^T (\alpha P_x - I) = 0$. It remains to prove that the rank of $\frac{1}{m} P_x - I$ is exactly $n-1$. Suppose, on the contrary, that it is $n-2$ or lower. Hence the eigenvalue 1 has geometric multiplicity at least $2$ in the column-stochastic matrix $\frac{1}{m} P_x$. It follows from the Perron-Frobenius theory of stochastic matrices (see e.g. \cite[Section~10.4, Fact 1(g)]{Handbook}) that $\frac{1}{m} P_x$ has at least two distinct ergodic classes: that is, there exist two disjoint subsets of indices $E_1, E_2$ such that one can write (reordering the matrix entries and setting $E_3 = (E_1 \cup E_2)^c)$
    \[
    P_x = 
    \begin{blockarray}{cccc}
    & E_1 & E_2 & E_3\\
    \begin{block}{c[ccc]}
    E_1 & P_{E_1E_1} & 0 & P_{E_1E_3}\\
    E_2 & 0 & P_{E_2E_2} & P_{E_2E_3}\\
    E_3 & 0 & 0 & P_{E_3E_3}\\
    \end{block}
    \end{blockarray},
    \]
    with $\frac{1}{m} P_{E_1E_1}$ and $\frac{1}{m} P_{E_2E_2}$ irreducible and stochastic.
    
    Expanding the formula~\eqref{Px}, one sees that the entries of $P_x$ are computed according to
    \begin{multline*}
    P_{i,j} = \sum_{k_1,k_2,k_{m-1}=1}^n  \left(R_{i, (j,k_1,k_2,\dots, k_{m-1})} + R_{i, (k_1,j,k_2,\dots, k_{m-1})} + \dots \right. \\ 
    \left. + R_{i, (k_1,k_2,\dots, k_{m-1},j)} \right) x_{k_1}x_{k_2}\dotsm x_{k_{m-1}},
    \end{multline*}
    where we have used tuples $(j_1,j_2,\dots,j_m) \in \{1,2,\dots,n\}^m$ as indices for the columns of $R$. Let us consider a tuple $(j_1,j_2,\dots,j_m)$ such that $j_1\in E_1$ and $j_2\in E_2$. Since  $x>0$ and $R \geq 0$, we must have
    $R_{i,(j_1,j_2,\dots,j_m)}=0$ for each $i\in E_1$, because otherwise the entry $P_{i,j_2}$ in $P_{E_1E_2}$ would be strictly positive. Analogously, $R_{i,(j_1,j_2,\dots,j_m)}=0$ for each $i \in E_1 \cup E_3$, since otherwise the entry $P_{i,j_1}$ in $P_{E_2E_1}$ or $P_{E_3E_1}$ would be strictly positive. It follows that $R_{i,(j_1,j_2,\dots,j_m)}=0$ for all $i$. However, this implies
    \[
    (e^T R)_{(j_1,j_2,\dots,j_m)} = \sum_{i=1}^n R_{i,(j_1,j_2,\dots,j_m)} = 0,
    \]
    which contradicts $e^T R = (e^T)^{\otimes m}$.
    \item It is sufficient to prove that the map $w \mapsto (\alpha P_x - I)w$ has trivial kernel in $\Delta_0$, i.e., $(\alpha P_x - I)w \neq 0$ for all $w \in \Delta_0 \setminus \{0\}$.
    
    By the Perron--Frobenius theorem, there is a nonnegative vector $z\neq 0$ such that $\alpha P_x z = z$, i.e., $(\alpha P_x - I)z = 0$. Since we know that $\alpha P_x - I$ has rank $n-1$, it follows that $\ker (\alpha P_x - I) = \operatorname{span}(z)$. Since $z$ is nonnegative, $\operatorname{span}(z) \cap \Delta_0 = \{0\}$, hence $\alpha P_x - I$ has trivial kernel in $\Delta_0$. \qedhere
\end{enumerate}
\end{proof}

\begin{thm} \label{thm:curve}
Let $\mathcal{S}=\left\{ (x,\alpha) \mid H(x,\alpha) =0,\, x\in \mathcal{Z}_1\right\}$ and  $\mathcal{S}_{I}=\left\{(x,\alpha)\in \mathcal{S} \mid  \alpha \in I\right\}$.
\begin{enumerate}
    \item $\mathcal{S}_{[0,\frac{1}{m}]}$ is a smooth curve (with boundary).
\end{enumerate}
If moreover $J_H[x,\alpha]$ is surjective as a linear map from $\Delta_0 \times \mathbb{R}$ to $\Delta_0$ for each point $(x,\alpha) \in \mathcal{S}$ with $\alpha \leq 1$, then we have that
\begin{enumerate}[resume]
    \item For some $\varepsilon > 0$,
     $\mathcal{S}_{[-\varepsilon,1+\varepsilon)}$ is union of disjoint smooth curves (with boundary).
    \item The curve $\mathcal{S}^*$ containing $\mathcal{S}_{[0,\frac{1}{m}]}$ reaches the hyperplane $\{\alpha=1\}$.
\end{enumerate}
\end{thm}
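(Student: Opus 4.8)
The plan is to read the three claims as increasingly global consequences of the regular value theorem applied to the restriction $\tilde H:=H|_{\Delta_1\times\mathbb R}$, viewed as a smooth map from the affine manifold $\Delta_1\times\mathbb R$ (of dimension $n$) to the vector space $\Delta_0$ (of dimension $n-1$); the inclusion $\tilde H(\Delta_1\times\mathbb R)\subseteq\Delta_0$ holds since $e^TH(x,\alpha)=g(e^Tx)=g(1)=0$ when $e^Tx=1$. I will use the positivity assumption $v>0$ of Corollary~\ref{cor:solsincube}: every solution with $\alpha\in[0,1)$ then lies in the open cube $(0,1)^n$ by that corollary, and for $\alpha\in(-\varepsilon,0)$ (with $\varepsilon$ small) the same follows from a short compactness argument; hence near every such solution the sign constraint defining $\mathcal Z_1$ is inactive and $\mathcal S$ coincides locally with $\tilde H^{-1}(0)$, on which the smooth structure is that of a curve.

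For point~1 I would verify that $\tilde H$ is a submersion at each point of $\mathcal S_{[0,1/m]}$. Its differential is $(w,t)\mapsto(\alpha P_x-I)w+(Rx^{\otimes m}-v)t$ on $\Delta_0\times\mathbb R$. When $\alpha<\frac1m$, the Lemma gives that $\alpha P_x-I$ is invertible on $\mathbb R^n$, and since (using $e^Tx=1$) it maps $\Delta_0$ into itself it is bijective on $\Delta_0$; when $\alpha=\frac1m$, the third assertion of the Lemma says precisely that $\alpha P_x-I:\Delta_0\to\Delta_0$ is onto. So in both cases the $\partial/\partial x$ block already surjects onto $\Delta_0$, $\tilde H^{-1}(0)$ is a smooth $1$-manifold, and therefore $\mathcal S_{[0,1/m]}$ is a $1$-manifold with boundary, because $\alpha$ restricted to the curve has non-vanishing differential at $\alpha\in\{0,\frac1m\}$: a tangent vector with $\dot\alpha=0$ would produce $0\ne\dot x\in\Delta_0\cap\ker(\alpha P_x-I)$, impossible for $\alpha<\frac1m$, and impossible at $\alpha=\frac1m$ because $\ker(\tfrac1m P_x-I)$ is spanned by a nonnegative Perron vector, which meets $\Delta_0$ only in $0$. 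Finally $\mathcal S_{[0,1/m]}$ is connected: since $\min(1,c_\alpha)=1$ there, the stochastic solution is unique (uniqueness of the minimal solution), so $\mathcal S_{[0,1/m]}$ is the graph over $[0,\frac1m]$ of $\alpha\mapsto x(\alpha)$, with $x(0)=v$.

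For point~2 I would make the surjectivity uniform. The locus $\mathcal U\subseteq\Delta_1\times\mathbb R$ where $\mathrm d\tilde H$ is onto $\Delta_0$ is open; by point~1 it contains every solution with $|\alpha|<\frac1m$ (there $\alpha P_x-I$ is invertible since $\rho(\alpha P_x)=|\alpha|\,\rho(P_x)=|\alpha|m$), and by hypothesis it contains every solution with $\frac1m\le\alpha\le1$. Since $\mathcal Z_1\times[0,1]$ is compact and $\mathcal S_{[0,1]}\subseteq\mathcal U$ with $\mathcal U$ open, a routine compactness argument (a sequence $(x_k,\alpha_k)\in\mathcal S\setminus\mathcal U$ with $\alpha_k\to1^+$ would subconverge to a point of $\mathcal U$, contradicting openness) yields $\varepsilon\in(0,\frac1m)$ with $\mathcal S_{(-\varepsilon,1+\varepsilon)}\subseteq\mathcal U$. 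The regular value theorem then makes $\tilde H^{-1}(0)\cap(\Delta_1\times(-\varepsilon,1+\varepsilon))$ a smooth $1$-manifold, and cutting it at $\alpha=-\varepsilon$ — a transverse level, since $\alpha P_x-I$ is invertible there — yields the $1$-manifold with boundary whose connected components are the disjoint curves of the statement.

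Point~3 is the real obstacle, and I would settle it by a topological counting argument. Let $\mathcal S^*$ be the connected component of $\mathcal S_{[-\varepsilon,1+\varepsilon)}$ through $(v,0)$; it contains the connected set $\mathcal S_{[0,1/m]}$, so it is ``the curve containing $\mathcal S_{[0,1/m]}$''. Suppose for contradiction $\mathcal S^*\cap\{\alpha=1\}=\emptyset$; since $\alpha$ is continuous on the connected set $\mathcal S^*$ and $\alpha(v,0)=0$, this forces $\mathcal S^*\subseteq\{-\varepsilon\le\alpha<1\}$. Then $\mathcal S^*$ is compact: any point of its closure is a solution lying in $\mathcal Z_1$ with $\alpha$-coordinate in $[-\varepsilon,1]\subseteq[-\varepsilon,1+\varepsilon)$, hence it lies in $\mathcal S_{[-\varepsilon,1+\varepsilon)}$ and therefore in $\mathcal S^*$ (a connected component is closed). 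Now $(v,0)$ is the unique solution with $\alpha=0$ (there $x=v$), the differential of $\alpha$ along $\mathcal S^*$ is nonzero there (at $\alpha=0$ the block $\alpha P_x-I=-I$ is invertible), and $\partial\mathcal S^*\subseteq\{\alpha=-\varepsilon\}$ because all solutions with $\alpha\in(-\varepsilon,1)$ are interior to $\mathcal Z_1$; hence $0$ is a regular value of $\alpha|_{\mathcal S^*}$ avoiding $\alpha(\partial\mathcal S^*)$, so $\mathcal S^*\cap\{\alpha\ge0\}$ is a \emph{compact} $1$-manifold with boundary, and that boundary is the single point $(v,0)$ — which is impossible, since the boundary of a compact $1$-manifold has an even number of points. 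Therefore $\mathcal S^*$ reaches $\{\alpha=1\}$. The two places needing care are the boundary bookkeeping — that below $\alpha=1$ the only boundary of $\mathcal S_{[-\varepsilon,1+\varepsilon)}$ is the artificial cut at $\alpha=-\varepsilon$, which is exactly where $v>0$ and Corollary~\ref{cor:solsincube} enter — and the verification that $\mathcal S^*$ cannot escape through the open end $\alpha=1+\varepsilon$, which is precisely what the contradiction hypothesis and the compactness of $\mathcal S^*$ rule out.
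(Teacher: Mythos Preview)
Your proposal is correct and follows essentially the same route as the paper: both proofs view $H$ as a map $\Delta_1\times\mathbb R\to\Delta_0$, invoke the Lemma to get submersivity on $[0,\tfrac1m]$, extend this by openness/compactness for point~2, and then use the topology of compact $1$-manifolds for point~3. Your write-up is more scrupulous about boundary regularity (checking that $\dot\alpha\neq0$ at the endpoints) than the paper's.

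The only noteworthy difference is the packaging of point~3. The paper intersects $\mathcal S^*$ with the closed hypercube $[0,1]^n\times[0,1]$, observes that the resulting compact $1$-manifold is a circle or a segment, rules out the circle via uniqueness at $\alpha=0$, and concludes that the second endpoint of the segment must sit at $\alpha=1$ (Corollary~\ref{cor:solsincube} forces the boundary onto the $\alpha$-faces). You instead argue by contradiction: if $\mathcal S^*$ avoided $\{\alpha=1\}$ it would be compact, and then $\mathcal S^*\cap\{\alpha\ge0\}$ would be a compact $1$-manifold with a single boundary point $(v,0)$, violating parity. These are equivalent uses of the classification of compact $1$-manifolds; the paper's version is a little more direct, while yours makes the role of the contradiction hypothesis and of Corollary~\ref{cor:solsincube} (ruling out boundary on the $x$-faces) slightly more explicit.
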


\begin{proof}
We prove the three points.
\begin{enumerate}
\item From lemma 1.1 we know that $\frac{\partial H}{\partial x}[x,\alpha]$ is surjective for $0\leq\alpha\leq\frac{1}{m}$; since surjectivity of the Jacobian is an open condition, there exists $\varepsilon > 0$ such that $J_H[x,\alpha]$ is surjective for each point $(x,\alpha)$ with $-\varepsilon <\alpha < 1+\varepsilon$. \\
This allows us to apply the Regular Level Set Theorem~\cite[Section~9.3]{Manifold} to 
$$H_{\big|\Delta_1\times \left(-\varepsilon,\frac1m+\varepsilon\right)}:\Delta_1\times \left(-\varepsilon,\frac1m+\varepsilon\right)\to \Delta_0,$$ obtaining that $H_{\big|\Delta_1\times \left[0,\frac1m\right]}^{-1}(0)$ is union of smooth curves with boundary.

By theorem 3, for each $\alpha\in \left[0,\frac{1}{m}\right]$ there is only one solution, hence there is only one curve in the union and it coincides with $\mathcal{S}_{[0,\frac1m]}$.
\item We argue as in the previous point: since surjectivity of the Jacobian is an open condition, there exists $\varepsilon > 0$ such that $J_H[x,\alpha]$ is surjective for each point $(x,\alpha)$ with $\alpha < 1+\varepsilon$. We can now apply the Regular Level Set Theorem to the whole $\Delta_1\times \left[0,1+\varepsilon\right)$.
\item Let $\mathcal{C}$ denote the hypercube $\left\{(x,\alpha)\in\mathbb{R}^n\times [0,1] \hspace{1mm}\big|\hspace{1mm} 0\leq x_i\leq 1 \right\}$. The intersection $\mathcal{C} \cap \mathcal{S}^* $ is a connected compact 1-manifold with boundary $\partial C \cap \mathcal{S}^*$; such a manifold is diffeomorphic to $\mathbb{S}^1$ or to a closed segment.

It cannot be diffeomorphic to $\mathbb{S}^1$ since otherwise there would be two different branches of $\mathcal{S}^*$ starting from $(v,0)$  (the solution at $\alpha = 0$), contradicting the uniqueness of the solution for $\alpha < \frac1m$.

Being diffeomorphic to a segment, the intersection has two boundary points that must have $\alpha = 0$ or $\alpha = 1$ by Corollary~\ref{cor:solsincube}. Since there is only one solution with $\alpha = 0$, there must be one with $\alpha = 1$. \qedhere

\end{enumerate}
\end{proof}

\begin{rem}
    We could not find an explicit example in which $J_H[x,\alpha]$ is not surjective as a linear map from $\Delta_0 \times \mathbb{R}$ to $\Delta_0$ for each point $(x,\alpha) \in \mathcal{S}$ with $\alpha \leq 1$. We conjecture that this condition always hold, but we do not have a proof.
\end{rem}

\section{Numerical continuation methods} \label{sec:continuation}

Numerical continuation methods~\cite{Allgower:2003:INC:945750} are a family of numerical methods to trace solution curves defined as the solution set of a system of nonlinear equations of codimension 1, i.e., $G(y)=0$, where $G: \mathbb{R}^{n+1}\to \mathbb{R}^n$.

These methods are often used to compute solutions in a precise region of the curve following the implicit defined curve numerically: one computes an initial solution in a part of the curve where it is easier, and then uses it as an initial step to compute iteratively a sequence of points on the curve, each close to the previous one, with the goal of arriving to a solution in a different part of the curve.

We focus on prediction-correction methods~\cite[Chapter~2]{Allgower:2003:INC:945750}, which are a class of continuation methods structured in two parts:
\begin{enumerate}
    \item (the \emph{predictor step}) Given a starting point $y_k$ on the curve and a step-size $h_k$, we compute a new point $\hat{y}_k$ which is close to the curve (for instance, on its tangent line) and at distance $h_k$ from $y_k$.
    \item (the \emph{corrector step}) We seek a point $y_{k+1}$ on the curve near $\hat{y}_k$ through an iterative method.
        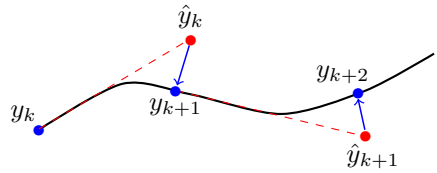
\begin{figure}[h]
    \centering
\begin{tikzpicture}
\draw  [thick]
plot [smooth] coordinates { (-2.2,0.99) (-1.1, 1.6) (-0.2,1.46) (1,1.2) (2.1,1.51) (3,2)}
(-2.2,0.97) node[blue] {\textbullet};
\draw [<-] (-0.38,1.58) -- (-0.22,2.11) [blue, line width=0.2mm];
\draw [<-] (2,1.4) -- (2.09,0.99) [blue, line width=0.2mm];
\draw
(-2.4,1.2) node{$y_k$};
\draw plot (-2.2,1) -- (-0.2,2.2)[dashed,red];
\draw (-0.4,1.51) -- (2.1,0.9)[dashed,red];
\draw (-0.2,2.17) node[red] {\textbullet}
(-0.4,1.49) node[blue] {\textbullet}
(-0.4,1.29) node{$y_{k+1}$}
(-0.2,2.5) node{$\hat{y}_k$};
\draw
(2.1,0.89) node[red] {\textbullet}
(2.0,1.46) node[blue] {\textbullet}
(1.8,1.73) node{$y_{k+2}$}
(2.2,0.65) node{$\hat{y}_{k+1}$};
\end{tikzpicture}
\caption{Illustration of predictor step (red dashed lines) and corrector step (blue lines).}
\end{figure}
    
\end{enumerate}

In order to start the iteration, it is necessary to compute an initial point $y_0$ such that $G(y_0)=0$ and then to alternate the two steps above.

A first example is the strategy suggested in~\cite{MeiP}, which can be interpreted in this framework by setting 
\[
G:\begin{bmatrix}
x \\
\alpha
\end{bmatrix}
\mapsto H(x,\alpha).
\]
As initial step, we choose an initial value $\alpha^{(0)}$ that is close to $\frac1m$, and compute a corresponding $x^{(0)}$ such that $H(x^{(0)},\alpha^{(0)}) = 0$ via either the fixed-point iteration~\eqref{fixedpoint}, or a more efficient method (for instance, the Newton method). Then, at each step $k$,
\begin{enumerate}
    \item (predictor) we choose $\alpha^{(k)} = \alpha^{(k-1)}+h_k$, for a certain step-size $h_k$ (hence $\alpha^{(0)} < \alpha^{(1)} < \dots < \alpha^{(k-1)} < \alpha^{(k)}$ always form an increasing sequence), and use $x^{(0)}, \dots, x^{(k-1)}$ to compute a vector $\hat{x}$ such that $H(\hat{x}, \alpha^{(k)}) \approx 0$. Several variants were tested~\cite{MeiP}, for instance, linear extrapolation, or a Taylor expansion around $x^{(k-1)}$.
    \item (corrector) we use $\hat{x}$ as an initial value for a fixed-point iteration such as~\eqref{fixedpoint} or the Newton method to compute $x^{(k)}$ such that $H(x^{(k)}, \alpha^{(k)})$.
\end{enumerate}

In this setting, at each step we fix a value of $\alpha^{(k)}$ arbitrarily, and then compute the corresponding $x^{(k)}$. Hence essentially we use the variable $\alpha$ to parametrize the curve $\mathcal{S}^*$. This strategy works well in most cases, but runs into trouble when the curve can not be written locally as the graph of a function $x(\alpha)$, i.e., when $\frac{\partial H(x,\alpha)}{\partial x}$ is singular. Figure~\ref{fig:homoplot} is a typical example of this pathological behavior.

Hence, we replace the two steps with more general versions that arise from considering $\mathcal{S}^*$ as a generic curve in $\mathbb{R}^{n+1}$, without singling out $\alpha$ as a preferred parametrization coordinate.

\section{Continuation methods for this problem} \label{sec:thiscontinuation}

We now present the Predictor-Corrector-Newton method, the actual algorithm that we use to solve the Multilinear PageRank problem.

\subsection{Predictor step}

Following~\cite{Allgower:2003:INC:945750}, we use the tangent line to $\mathcal{S}^*$ as a predictor. Given a starting point $(x^{(k)},\alpha^{(k)})$ and a step size $\tau_k$, we look for a point  $(\hat{x}, \hat{\alpha})$ on the tangent line to the solution curve in $(x^{(k)},\alpha^{(k)})$ at distance $\tau_k$ from the starting point.

By the implicit function theorem (see e.g.~\cite[Appendix~B]{Manifold}), the tangent direction is the kernel of the Jacobian matrix $JH[x^{(k)},\alpha^{(k)}]$.

We compute it through a $QR$ decomposition
\[
    JH[x^{(k)},\alpha^{(k)}]^T = [Q\quad q_{n+1}] \begin{bmatrix} R\\ 0 \end{bmatrix}.
\]
Notice that $q_{n+1}\in \mathbb{R}^n$ spans the kernel of $JH[x^{(k)},\alpha^{(k)}]$ (assuming that $JH[x^{(k)},\alpha^{(k)}]$ has full rank).

We also need to make sure that we move on the tangent line in the correct direction to go towards new portions of the curve: to do this, we change the sign of $q_{n+1}$, if needed, so that
    \[
    \left\langle
    \begin{bmatrix}
    x^{(k-1})\\
    \alpha^{(k-1)}
    \end{bmatrix}
    -
    \begin{bmatrix}
    x^{(k)}\\
    \alpha^{(k)}
    \end{bmatrix}
    ,\,
    q_{n+1}
    \right\rangle \geq 0.
    \]
Then we can set
\[
\begin{bmatrix}
\hat{x}\\
\hat{\alpha}
\end{bmatrix}
=
\begin{bmatrix}
x^{(k)}\\
\alpha^{(k)}
\end{bmatrix} + q_{n+1} \tau_k.
\]

We point out that $QR$ factorization can be prohibitively expensive when $n$ is large; in this case, it may be preferable to replace this strategy with linear extrapolation, that is,
\begin{equation}
    (\hat{x}, \hat{\alpha}) = (x^{(k)},\alpha^{(k)}) + \tau_k \frac{(x^{(k)},\alpha^{(k)})-(x^{(k-1)},\alpha^{(k-1)})}{\norm{(x^{(k)},\alpha^{(k)})-(x^{(k-1)},\alpha^{(k-1)})}}.
\end{equation}

\subsection{Corrector step}

The bulk of our algorithm is the corrector step, where we use Newton's method for underdetermined systems
\begin{equation} \label{pcnewton}
    (x_{j+1},\alpha_{j+1}) = (x_j,\alpha_j)-JH[x_j,\alpha_j]^+H(x_j,\alpha_j), \quad j = 0,1,\dots
\end{equation} to compute a solution of $H(x,\alpha)=0$ close to $(\hat{x},\hat{\alpha})$.

Every iteration requires the computation of $JH[x_j,\alpha_j]^+$, which we compute, again, with QR factorization: if $JH[x_j,\alpha_j]^T = [Q \quad q_{n+1}] \begin{bmatrix}
            R\\
            0
        \end{bmatrix}$, then
\[
   JH[x_j,\alpha_j]^+ = [Q \quad q_{n+1}] \begin{bmatrix} R^{-T} \\ 0 \end{bmatrix}.
\]

\subsection{Choice of step size}

We choose an adaptive step size $\tau_k$ with a simplified version of the approach in~\cite[Section~6.1]{Allgower:2003:INC:945750}.

Let $\hat{\delta}$ be the distance between the predicted value $\hat{y}$ and the curve $\mathcal{S}^*$. This distance is approximately equal to the square of the Newton correction at the \emph{first} corrector step $j=1$, up to second-order terms, and it can also be shown to be proportional to the square of the predictor step-length, $\tau_k^2$, i.e.,
\begin{equation} \label{Crelation}
\hat{\delta} \approx \norm{JH[x_1,\alpha_1]^+ H(x_1,\alpha_1)}_1 \approx C \tau_k^2
\end{equation}
for a given constant $C$.
We would like this distance to keep close to a prescribed ``nominal distance'' $\delta$ at each iteration. To this purpose, we compute $C$ from~\eqref{Crelation} (replacing the rightmost $\approx$ with an equal sign), and then choose the next step-size $\tau_{k+1}$ so that $C\tau_{k+1}^2 = \delta$. In addition, some safeguards are taken so that the step-size does not change by more than a factor 2 at each step; see Algorithm~\ref{algo:pc} in the following for details.

\subsection{Final step}

Since the solution curve $\mathcal{S}^*$ reaches $\alpha=1$ (Theorem~\ref{thm:curve}), if the continuation method is successful it will eventually obtain an iteration where $\alpha^{(k)} < \alpha \leq \alpha^{(k+1)}$, where $\alpha > \alpha^{(0)}$ is the desired target value for which we wish to compute a solution $x$ to~\eqref{mlpr}. 

When this happens, we have not reached yet our goal of computing a solution $x$ to~\eqref{mlpr} with the desired target value of the parameter $\alpha$, but we only have two nearby points $(x^{(k)}, \alpha^{(k)})$ and $(x^{(k+1)}, \alpha^{(k+1)})$ on the solution curve $\mathcal{S}^*$. As a final step, we compute $\hat{x}\in\mathbb{R}^n$ such that $(\hat{x}, \alpha)$ is on the segment between these two points (linear interpolation), and use it as the starting point for a final round of the Newton--Raphson method~\eqref{newton} with fixed $\alpha$. We expect this final round to have very fast convergence, since we have identified a suitable starting point near to the solution curve.

Full pseudocode for the algorithm is presented as Algorithm~\ref{algo:pc}.

\begin{algorithm}
\caption{Predictor-Corrector-Newton method} \label{algo:pc}
\vspace{2mm}
\textbf{Parameters} \hspace{2mm} $tol$ \hfill tolerance;\\ 
\phantom{\textbf{Parameters}} \hspace{2mm} $\alpha_0$ \hfill initial $\alpha$ for curve-following;\\
\phantom{\textbf{Parameters}} \hspace{2mm} $\tau_0$\hfill initial step-size (adaptive);\\
\phantom{\textbf{Parameters}} \hspace{2mm} $\delta $ \hfill nominal distance to curve.\\
\\
\textbf{Subroutines} \hspace{0mm} $\textsf{newton}(x_0,\alpha)$ \hfill Newton's method~\eqref{newton} with fixed $\alpha$. \\
\textbf{Input} \hspace{12mm} $R$, $\alpha$, $v$  \hfill as described above. \\
\textbf{Output} \hspace{9mm} $x$ \hfill a stochastic solution to \eqref{mlpr}.\\
\\
\phantom{\textbf{compute}} \hspace{6mm} $x_0 \leftarrow \textsf{newton}(v, \alpha_0)$\\
\phantom{\textbf{Put}} \hspace{16mm}$y_0\leftarrow(x_0,\alpha_0)$;\\
\phantom{\textbf{Put}} \hspace{16mm}$t_0 \leftarrow e_{n+1}$; \hfill initial direction for curve-following;\\
\phantom{\textbf{Put}} \hspace{16mm}$k \leftarrow 0$; \\
\phantom{while while wr} \textbf{while} $\alpha_k<\alpha$  \hfill \phantom{a}\\
\phantom{while}\hspace{20mm} $t_{k+1}\leftarrow $ kernel of $JH[y_k]$ (normalized s.t. $\norm{t_{k+1}}=1$);\\
\phantom{while}\hspace{20mm} \textbf{if} $\langle t_k, t_{k+1} \rangle<0$\\
\phantom{while}\hspace{25mm} $t_{k+1}\leftarrow -t_{k+1}$; \hfill ensures $t_{k+1}$ ``points forward'';\\
\phantom{while}\hspace{20mm} \textbf{end}\\
\phantom{while}\hspace{20mm} $\hat{y}_{k+1} \leftarrow y_k+t_{k+1}\tau_k$; \hfill predictor step;\\
\phantom{while}\hspace{20mm} $(\hat{x}_{k+1}, \hat{\alpha}_{k+1}) \leftarrow \hat{y}_{k+1}$;\\
\phantom{while}\hspace{20mm} $j \leftarrow 0$;\\
\phantom{while}\hspace{20mm} \textbf{while} $\norm{H(\hat{y}_{k+1})}_1>tol$  \hfill corrector loop;\\
\phantom{while}\hspace{25mm} $j \leftarrow j+1;$\\
\phantom{while}\hspace{25mm} $d\leftarrow -JH[\hat{y}_{k+1}]^+H(\hat{y}_{k+1})$;\\
\phantom{while}\hspace{25mm} \textbf{if} $j=1$\\
\phantom{while}\hspace{29mm} $f\leftarrow \sqrt{\frac{\norm{d}_1}{\delta}}$; \hfill deceleration factor as in \cite[Sec.~6.1]{Allgower:2003:INC:945750};\\
\phantom{while}\hspace{29mm} \textbf{if} $f>2$ \hfill predictor step too large; \\
\phantom{while}\hspace{33mm} $\tau_k\leftarrow \frac{\tau_k}{2}$;\\
\phantom{while}\hspace{33mm} \textbf{back} to predictor step; \\
\phantom{while}\hspace{29mm} \textbf{end}\\
\phantom{while}\hspace{25mm} \textbf{end} \\ \phantom{while}\hspace{25mm} $\hat{y}_{k+1}\leftarrow \hat{y}_{k+1}+d; $\\
\phantom{while}\hspace{25mm} \textbf{end}\\
\phantom{while}\hspace{20mm} \textbf{end}\\
\phantom{while}\hspace{20mm} $y_{k+1}\leftarrow \hat{y}_{k+1}$;\\
\phantom{while}\hspace{20mm} $(x_{k+1}, \alpha_{k+1})\leftarrow y_{k+1}$;\\
\phantom{while}\hspace{20mm} $f\leftarrow \max{(f, \frac12)}$;\\
\phantom{while}\hspace{20mm} $\tau_{k+1}\leftarrow \frac{\tau_k}{f}$; \hfill choose next step-size;\\
\phantom{while}\hspace{20mm} $\tau_{k+1}\leftarrow \min{(\tau_{k+1},5 \tau_0)}$;\\
\phantom{while}\hspace{20mm} $k \leftarrow k+1$;\\
\phantom{while while wr} \textbf{end}\\
\phantom{while while wr} $\eta \leftarrow \frac{\alpha-\alpha_{k-1}}{\alpha_k-\alpha_{k-1}}$; \hfill linear interpolation;\\
\phantom{while while wr} $\hat{y}\leftarrow y_{k-1}+\eta(y_k-y_{k-1}); $\\
\phantom{while while wr} $(\hat{x},\hat{\alpha}) \leftarrow \hat{y}$; \hfill it must hold that $\hat{\alpha}=\alpha$;\\
\phantom{while while wr} $x\leftarrow \textsf{newton}(\hat{x}, \alpha)$.
\vspace{2mm}
\end{algorithm}

\newpage

\section{Numerical experiments} \label{sec:experiments}

We compare the Predictor-Corrector-Newton algorithm (PC-N, Algorithm~\ref{algo:pc}) with two different methods: the Newton method (N) with $x_0=(1-\alpha)v$ and the Perron-Newton method (PN-IMP). These algorithms are implemented as described in ~\cite{DBLP:journals/corr/GleichLY14} and ~\cite{MeiP} respectively, and have been chosen because they were the better performers among the algorithms considered in those two papers.

We investigate their performance on two sets of problems: the benchmark set used in ~\cite{DBLP:journals/corr/GleichLY14}, and a set of $100\,000$ random matrices $R\in\mathbb{R}^{5 \times 5^2}$. Each of the latter ones has been created starting from the zero matrix and then, for each $j$, setting $R_{ij}=1$, where $i\in \{1,2,3,4,5\}$ is chosen at random (uniformly and independently).

To better compare iteration counts for algorithms that are based on nested iterations, in the following we define one `iteration' to be an inner iteration, that is, anything that requires a matrix factorization on a dense linear algebra subproblem:
\begin{center}
\begin{tabular}{ll}
\toprule
Algorithm & What is counted as one iteration\\
\midrule
 N & a Newton iteration; \\ 
 PN-IMP  & a Newton or a Perron-Newton iteration;\\
 PC-N & a Newton iteration, or a predictor step, or a corrector step.\\
 \bottomrule
\end{tabular}
\end{center}

In all the experiments we have set the tolerance $tol=\sqrt{\mathbf{u}}$, where $\mathbf{u}$ is the machine precision, the maximum number of iterations $maxit=10\,000$ and $v=\frac{1}{n}e$. In PC-N, we used $\tau_0=0.01$ and $\delta = 0.1$.

The experiments have been run on Matlab R2020b on a computer equipped with a 64-bit Intel core i7-4700MQ processor.

Note that neither the number of iterations nor the CPU time are particularly indicative, alone, of the true performance to expect from the algorithms on large tensors: indeed, the iterations in the different methods amount to different quantities of work, and the CPU time may scale differently for each algorithm when one switches to linear algebra primitives better suited to large-scale problems. Rather, we focus here on the reliability of the algorithms, i.e., the number of problems that they can solve successfully.

In the first experiment, we ran each algorithm on every tensor of the benchmark set, checking the number of iterations and the CPU times in seconds required for the solution of the problem. Performance profiles (see e.g.~\cite[Section~22.4]{matlabguide} for an introduction to this type of plot) are reported in Figures~\ref{fig:perfprof1}--\ref{fig:perfprof3}.

\begin{figure}
\includegraphics[width=12cm]{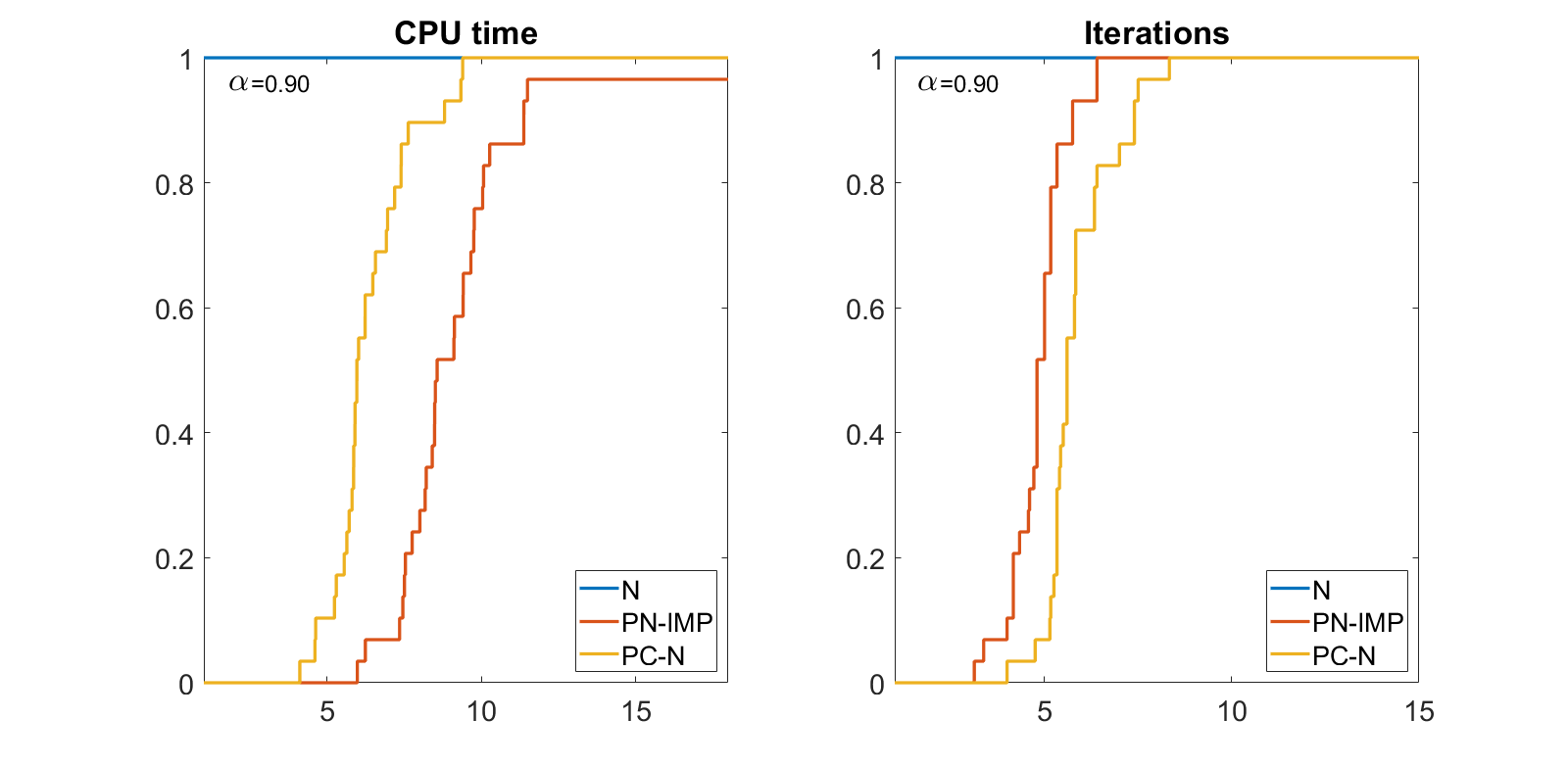}
\caption{Performance profiles for the 29 benchmark tensors and $\alpha= 0.90$.} \label{fig:perfprof1}
\end{figure}
\begin{figure}
\includegraphics[width=12cm]{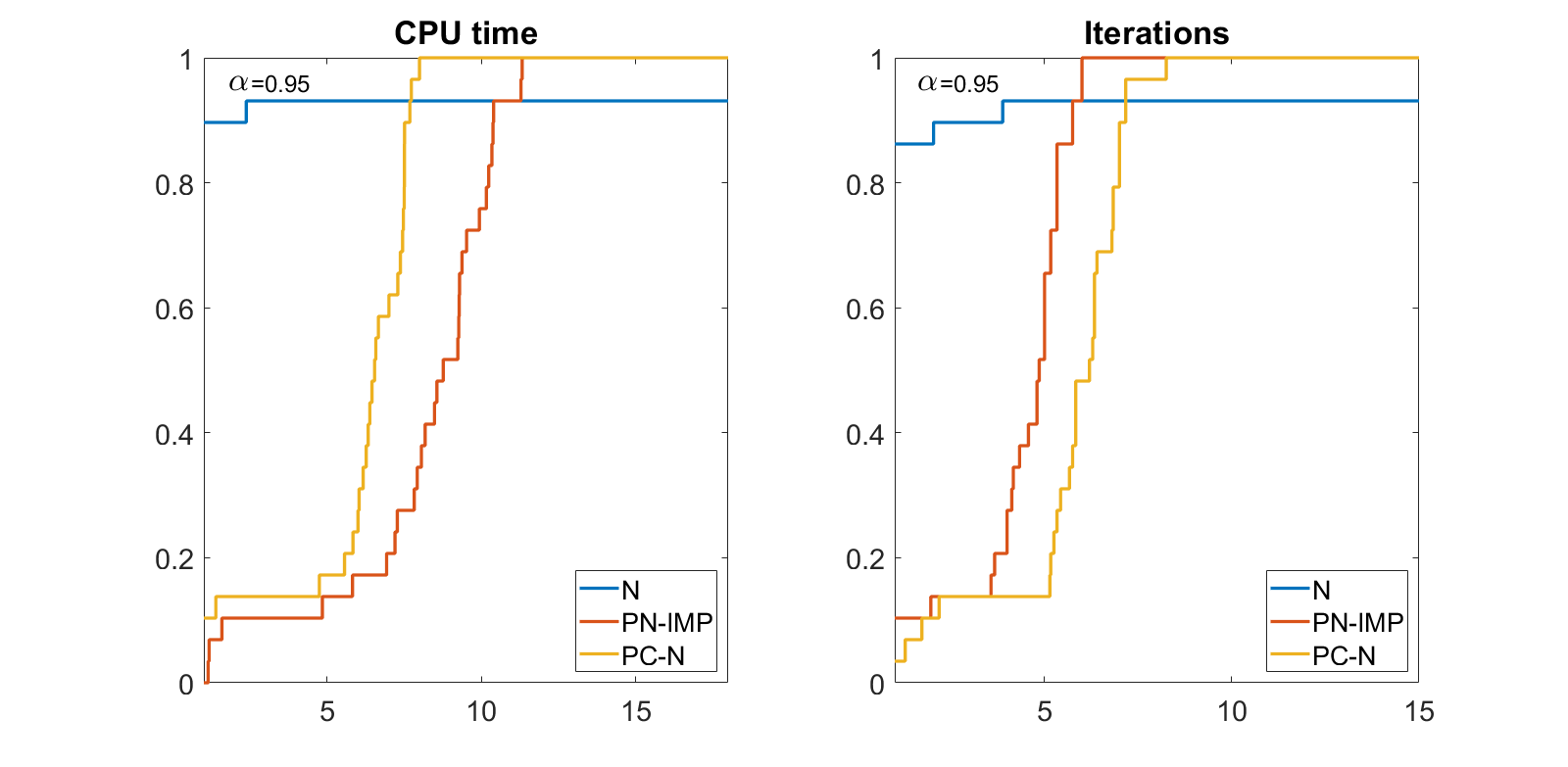}
\caption{Performance profiles for the 29 benchmark tensors and $\alpha= 0.95$.}
\end{figure}
\begin{figure}
\includegraphics[width=12cm]{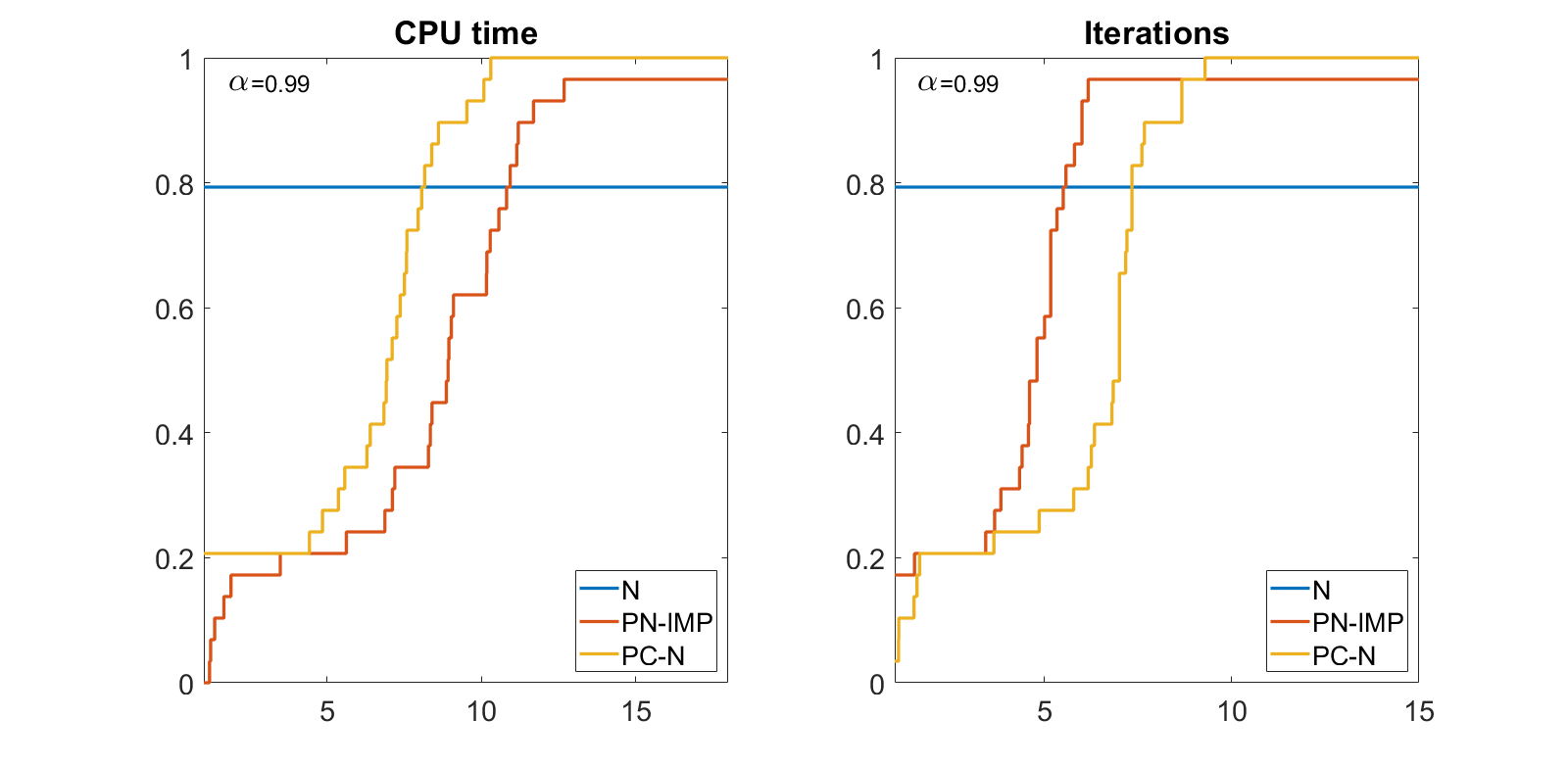}
\caption{Performance profiles for the 29 benchmark tensors and $\alpha= 0.99$.} \label{fig:perfprof3}
\end{figure}

The performance profile shows that the Newton method is always the fastest method, when it works, but it fails on various problems (6 out of 29 for $\alpha=0.99$). The new method is about a factor 5 slower, but has much higher reliability, failing on none of the problems. PN-IMP fails on one of the problems, and it is slower in terms of CPU time: this is to be expected, since eigenvalue computations are generally slower than QR factorizations and linear system solutions.

In the second experiment, we counted the number of failures of each of the three algorithms for different values of $\alpha$. The results are reported in Table~\ref{tab:failures}.

\begin{table}
\centering
\begin{tabular}{cccc}
    \toprule
    $\alpha$  & N & PN-IMP & PC-N \\
    \midrule
    0.90 & 102 & 22 & 0 \\
    0.95 & 317 & 47 & 0  \\
    0.99 & 693 & 136 & 0 \\
    \bottomrule
\end{tabular}
\caption{Number of failures recorded on 100k random sparse tensors of size $5$.} \label{tab:failures}
\end{table}

\section{Conclusions} \label{sec:conclusions}

The experimental results confirm that the suggested method, based on the combination of predictor-corrector continuation methods and Newton's method for underdetermined systems, is an effective and reliable method to solve multilinear Pagerank problems, even in cases that are problematic for most other methods. The theoretical analysis performed confirms that the solution curve always reaches a valid solution, at least in the case where the solution curve has no singular points. It remains to consider how this method scales to larger problems.

\paragraph{Acknowledgments} The authors are grateful to Beatrice Meini for several useful discussions on this topic.

\bibliography{biblio.bib}
\bibliographystyle{abbrv}

\end{document}